\documentclass[10pt]{amsart}
\usepackage{amsthm}
\usepackage{amsmath}
\usepackage{color,soul}
\usepackage{todonotes}
\usepackage{amssymb}
\usepackage{circledsteps}
\theoremstyle{plain}
\newtheorem{theorem}{Theorem}
\newtheorem*{mainthm}{Main Theorem}
\theoremstyle{definition}
\newtheorem{fact}{Fact}

\newtheorem{proposition}{Proposition}

\newtheorem{lemma}[proposition]{Lemma}
\newtheorem{definition}[proposition]{Definition}
\theoremstyle{remark}
\newtheorem*{claim}{Claim}

\DeclareMathOperator{\row}{Row}
\DeclareMathOperator{\col}{Col}

\DeclareMathOperator{\tcf}{tcf}

\newcommand{\pcfsig}{\text{\rm pcf}_{\sigma\text{\rm-com}}}

\DeclareMathOperator{\cf}{cf}

\DeclareMathOperator{\pcf}{pcf}

\DeclareMathOperator{\Reg}{{\sf REG}}
\newcommand{\sk}{\vskip.05in}

\newcommand{\restr}{\upharpoonright}

\DeclareMathOperator{\reg}{{\sf Reg}}

\DeclareMathOperator{\pp}{pp}

\numberwithin{equation}{section}

\begin{document}
\title{A note on the Revised GCH}
\begin{abstract}
We give a proof Theorem~2.10 from \cite{460} that eliminates the use of Shelah's nice filters and associated rank functions, and instead uses only the well-foundedness of reduced products of ordinals modulo countably complete filters.  This removes any need to make assumptions about the existence of large cardinals in core models.
\end{abstract}
\keywords{pcf theory, cardinal arithmetic, singular cardinals}
\subjclass[2010]{03E04, 03E55}
\author{Todd Eisworth}
\email{eisworth@ohio.edu}
\thispagestyle{empty}
\maketitle
\section{Introduction}

Shelah's Revised GCH is one of the jewels of pcf theory.  It is most commonly stated in terms of a ``revised power set'' operation $\lambda^{[\kappa]}$, defined for regular $\kappa<\lambda$ by letting $\lambda^{[\kappa]}$ be the minimal cardinality of a family $\mathcal{P}\subseteq[\lambda]^{\leq\kappa}$ such that every subset of $\lambda$ of cardinality $\kappa$ is a union of fewer than $\kappa$ elements of $\mathcal{P}$.

\begin{theorem}[RGCH]
If $\mu$ is an uncountable strong limit cardinal, then for every $\lambda\geq\mu$ there is a $\kappa_0<\mu$ such that for every $\kappa_0\leq\kappa<\mu$,
\begin{equation}
\lambda^{[\kappa]}=\lambda.
\end{equation}
\end{theorem}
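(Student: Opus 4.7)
The plan is to argue by induction on $\lambda$: fix a minimal counterexample, extract ``bad'' witnesses at countably many regular cardinals $\kappa_n$ cofinal in $\mu$, patch these into an infinite $<_D$-descending sequence in a reduced product of ordinals modulo a countably complete filter $D$, and invoke well-foundedness of that reduced product for the contradiction. Well-foundedness of $\prod_{i \in I} \ord / D$ whenever $D$ is countably complete is an immediate consequence of the definition (any descent yields a nonempty $D$-large intersection of ``decrease'' sets, on which one reads off an infinite decreasing sequence of ordinals), and it is precisely this fact that replaces Shelah's nice-filter/rank-function machinery.

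First, I would unpack $\lambda^{[\kappa]} > \lambda$: for every $\mathcal{P} \subseteq [\lambda]^{\leq\kappa}$ with $|\mathcal{P}| \leq \lambda$ there is some $A \in [\lambda]^{\kappa}$ not expressible as a union of fewer than $\kappa$ members of $\mathcal{P}$. Assume the theorem fails and fix a minimal $\lambda \geq \mu$ that witnesses this; then there is a strictly increasing sequence $\langle \kappa_n : n < \omega\rangle$ of uncountable regular cardinals with $\sup_n \kappa_n = \mu$ such that $\lambda^{[\kappa_n]} > \lambda$ for every $n$. Build a continuous $\in$-chain $\langle M_\alpha : \alpha < \lambda^+ \rangle$ of elementary submodels of some large $H(\chi)$, each of size $\lambda$ and containing $\mu$, $\lambda$, the sequence $\langle \kappa_n\rangle$, and canonical pcf generators for $\pcf(\mathfrak{a})$ for every $\mathfrak{a} \subseteq \reg \cap \mu$ of size $<\mu$. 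Choose inside $M_0$ a family $\mathcal{P}_n \in M_0$ of size $\leq \lambda$ witnessing an attempt at $\lambda^{[\kappa_n]} \leq \lambda$, and externally pick a bad $A_n \in [\lambda]^{\kappa_n}$ escaping $\mathcal{P}_n$.

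Next, for each $n$ and $\alpha$, the set $A_n$ is not covered by $M_\alpha$, and by taking characteristic-type functions of $M_\alpha$ on a suitable pcf basis $\mathfrak{a}_n \subseteq \reg \cap \mu$ one defines $h_n^\alpha \in \prod \mathfrak{a}_n$ that strictly decreases as $\alpha$ grows (modulo the pcf ideal on $\mathfrak{a}_n$). Strong-limitude of $\mu$ keeps $|\mathfrak{a}_n|$ bounded below $\mu$ and the minimality of $\lambda$ constrains the pcf-values that can appear. The key patching step is then to combine the $h_n^\alpha$ across $n$: on an index set $I$ built from the disjoint union $\bigsqcup_n \mathfrak{a}_n$, construct a countably complete filter $D$ and functions $g_\alpha \in \prod_{i \in I} \theta_i$ with each $\theta_i < \mu$, such that $\alpha < \beta$ forces $g_\beta <_D g_\alpha$. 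A $\lambda^+$-long such descent immediately contradicts well-foundedness of $\prod_{i \in I}\ord / D$.

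The main obstacle is exactly this patching: converting bad witnesses at countably many distinct $\kappa_n$'s into a single coherent $<_D$-descending $\lambda^+$-chain modulo one countably complete filter. In Shelah's original argument, nice filters bookkeep levels of escape via their associated ranks; here one must construct $D$ directly and verify the $<_D$-decrease using only elementarity of the $M_\alpha$'s, the minimality of $\lambda$, and the countable completeness of $D$ itself. Ensuring that the individual drops from $A_n$ into $M_\alpha$ cohere into a genuine ordinal descent on $I$ — rather than a non-wellfounded branching pattern that would require rank-function arguments to tame — is where the novel use of countable completeness in place of the rank machinery has to do the real work.
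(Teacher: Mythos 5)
Your proposal has a genuine gap that you yourself flag: the ``patching'' of bad witnesses at countably many $\kappa_n$'s into one $<_D$-descending chain is exactly the substance of the theorem, and nothing in your sketch supplies it. You describe the target (a descent in $\prod_{i\in I}\ord/D$ for a single countably complete $D$ on a disjoint union of pcf bases) but then write that ``one must construct $D$ directly and verify the $<_D$-decrease'' and that ensuring the drops ``cohere into a genuine ordinal descent \dots\ is where the novel use of countable completeness \dots\ has to do the real work.'' That is precisely the open problem your proposal was supposed to close; leaving it as an ``obstacle'' leaves the proof incomplete.

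Beyond the gap, the paper's route is quite different from what you outline, and it is instructive to see where it diverges. The paper does not work with $\lambda^{[\kappa]}$ directly, nor with elementary submodels and characteristic functions; it first passes to a purely pcf-theoretic Main Theorem (about $\pcfsig(A)\subseteq\lambda$) and treats RGCH as a standard corollary of it. The descent is then carried out in the ``currency'' of admissible pairs $(\bar\lambda,J)$ representing $\lambda^+$ at some singular $\lambda^*\in(\mu,\lambda)$: by well-foundedness of $\prod_{i<\kappa}\lambda^*/J$ for a single $\kappa$-complete ideal $J$ on a single $\kappa>\cf(\mu)$, one fixes a $<_J$-minimal admissible sequence representing $\lambda^+$, and the main lemma (Lemma~\ref{mainlemma}) produces a strictly smaller one. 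The patching problem you point to is dissolved rather than solved head on: instead of a filter on a disjoint union $\bigsqcup_n\mathfrak{a}_n$ whose countable completeness must be verified, the paper partitions the single index set $\kappa$ into $\cf(\mu)$ many $J$-positive pieces $X_\alpha$, applies the single-$\sigma$ lemma to $J\restriction X_\alpha$ with a suitable $\sigma_\alpha<\mu$, and then glues the resulting $\bar\psi^\alpha$'s back together; $\kappa$-completeness of $J$ with $\kappa>\cf(\mu)$ does all of the gluing automatically. Moreover, the real content of Lemma~\ref{mainlemma} is the pcf combinatorics (the No Holes theorem, Fact~2, transitive generators, $\sigma$-complete pcf-compactness, and the $\theta$-completeness of the auxiliary ideal $I$ at $\chi^*$) that let one run away from $B_\psi$ for all relevant $\psi$ simultaneously; none of this machinery appears in your sketch, and without something playing its role the claimed $<_D$-decrease will not materialize.
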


Shelah has given several proofs of this result, with three of them appearing in the original paper \cite{460} itself.  Other proofs appear in \cite{513} and \cite{829}, and the proof from the latter paper has a nice explication in the Handbook of Set Theory article on pcf theory by Abraham and Magidor \cite{AM}.  One of Shelah's proofs actually establishes a more general theorem, weakening the hypothesis on $\mu$ to a statement in pcf theory (Theorem~2.10 of~\cite{460}), with the full revised GCH being an easy corollary if $\mu$ is assumed to be a strong limit.  His proof makes use of so-called ``nice filters'' and their associated ranks (as presented in the series of papers \cite{111,256,420} as well as Chapters V and VI of~\cite{cardarith}).  This technology works well, but it does rely on assumptions about the existence of Ramsey cardinals in suitable versions of the core model. This is not an unreasonable assumption (it will hold in situations where the cardinal exponentiation of singular cardinals is ``interesting''), but the point of this paper is to show that such an assumption is unnecessary and that we can push through a proof of his result using only the standard Galvin-Hajnal rank associated with countably complete filters:  all we need to use is that reduced products of ordinals modulo a countably complete filter are well-founded, so that any non-empty collection of equivalence classes will have a minimal element.

The theorem we prove is as follows:

\begin{mainthm} Suppose $\mu$ is a singular cardinal such that whenever $A$ is a set of regular cardinals satisfying
$|A|<\mu<\min(A)$, we have $|\pcf(A)|<\mu$.
Then for every $\lambda>\mu$ there is a $\sigma<\mu$ such that
\begin{equation}
A\subseteq (\mu,\lambda)\cap\reg\text{ and }|A|<\mu\Longrightarrow \pcfsig(A)\subseteq\lambda.
\end{equation}
\end{mainthm}

Note that assumption will hold if $\mu$ is a strong limit, as the cardinality of $\pcf(A)$ is always at most $2^{|A|}$.  The conclusion of the original version of RGCH follows from the strong limit assumption as well, using some standard pcf arguments as in the introduction of~\cite{460}.

Note that if $\cf(\mu)<\kappa=\cf\kappa<\mu$, then there must exist a $\theta<\mu$ such that $|\pcf(A)|<\theta$ whenever $A$ is a set of regular cardinals beyond $\mu$ of cardinality less than $\kappa$.  This is easy to see: if it fails, then by taking unions of counterexamples, we can build a set $A$ with $|A|<\kappa$ such that $|\pcf(A)|\geq\mu$, contradicting our assumption on $\mu$.

Finally, we point out that the pcf-theoretic ideas we use to push through the proof are due to Shelah. Our contribution was to realize that the argument given for his third proof of the RGCH in~\cite{460} could be simplified, and then combined with ideas from his second proof to give the result stated here, eliminating the need to rely on the filters and ranks from Section~5 of \cite{420}.

\section{Vocabulary and background assumptions}
We assume that the reader is familiar with basic pcf theory through the level presented in~\cite{AM} or~\cite{3germans}.  In particular, we assume that the reader is familiar with properties of the pcf ideals $J_{<\lambda}[A]$ and their associated generators $B_\lambda[A]$ for $\lambda$ in $\pcf(A)$.  We also sometimes use the quantifier $\forall^J$ in the context of an ideal $J$ to abbreviate the expressions ``for $J$-almost all'',  and to indicate that something holds except possibly for a set of exceptions that are in the ideal $J$.

The proof we give hinges on the different ways in which cardinals might be represented as the true cofinality of products.  The following definition captures the ideas we need.

\begin{definition}\label{defn1}
Suppose $\lambda$ is a singular cardinal.
\begin{enumerate}
\item A pair $(\bar{\lambda},J)$ is said to be an admissible pair at $\lambda$ if
\begin{itemize}
\item $\bar{\lambda}=\langle\lambda_i:i<\cf(\lambda)\rangle$ is a sequence of regular cardinals with $\cf(\mu)<\lambda_i<\lambda$
\item $J$ is a proper $\cf(\lambda)$-complete ideal on $\cf(\lambda)$ extending the ideal of bounded sets
\item for each $\mu<\lambda$, $\{i<\kappa:\lambda_i\leq\mu\}\in J$
\end{itemize}
\item If $J$ is a fixed ideal on $\cf(\lambda)$, then we say that $\bar{\lambda}$ is $J$-admissible at $\lambda$ if $(\bar{\lambda},J)$ is an admissible pair at $\lambda$.
\item  An admissible pair $(\bar{\lambda}, J)$ at $\lambda$ is said to {\em represent} a cardinal $\chi$ at $\lambda$ if
    \begin{equation}
    \tcf\left(\prod_{i<\kappa}\lambda_i/ J\right)=\chi,
    \end{equation}
which means that  the reduced product has true cofinality $\chi$, that is, $\prod_{i<\kappa}\lambda_i$ contains a $<_J$-increasing and cofinal sequence of length~$\chi$.
\item We say a cardinal $\chi$ is representable at $\lambda$ if $\chi$ is represented by a suitable pair $(\bar{\lambda}, J)$ at $\lambda$.  We say that $\chi$ is $J$-representable at $\lambda$ if it is $(\bar{\lambda}, J)$-representable at $\lambda$ for some $J$-admissible $\bar{\lambda}$ at $\lambda$. Finally, we may say that $\chi$ is representable on cofinality $\kappa$ if it is representable at some singular cardinal of cofinality $\kappa$.
\end{enumerate}
\end{definition}

We are going to use two basic facts about the way cardinals can be represented by admissible pairs.  The first of these is one of Shelah's ``No Holes'' results that tell us the set of cardinals representable at $\lambda$ by admissible pairs using a fixed suitable ideal $J$ form an interval of regular cardinals.

\begin{fact}
Suppose $\lambda$ is a singular cardinal, and  $(\bar{\lambda}, J)$ is an admissible pair representing some regular cardinal $\chi$ at $\lambda$.  Then for every cardinal $\eta\in (\lambda,\chi)\cap\reg$, we can find a $J$-admissible sequence $\bar{\tau}$ such that
\begin{enumerate}
\item $(\bar{\tau}, J)$ represents $\eta$ at $\lambda$, and 

\sk

\item $(\forall^Ji<\cf(\lambda))[\tau_i<\lambda_i]$.
\end{enumerate}
\end{fact}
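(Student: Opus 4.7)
The plan is to execute Shelah's standard ``No Holes'' argument via exact upper bounds. Write $\kappa=\cf(\lambda)$ and fix a $<_J$-increasing, $<_J$-cofinal sequence $\vec{f} = \langle f_\alpha : \alpha<\chi\rangle$ in $\prod_{i<\kappa}\lambda_i$ witnessing that $(\bar{\lambda},J)$ represents $\chi$ at $\lambda$. Since $\eta<\chi$, the initial segment $\vec{f}\restr\eta$ is not $<_J$-cofinal, hence $<_J$-bounded in $\prod_i\lambda_i$. Well-foundedness of $\prod_i\ord/J$---an immediate consequence of the $\kappa$-completeness of $J$---produces a $<_J$-least upper bound $g$ of $\vec{f}\restr\eta$; modifying on a $J$-null set we may assume $g(i)<\lambda_i$ pointwise.

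Because $\eta=\cf(\eta)>\kappa$, the standard pcf upgrade (proved using only well-foundedness modulo the $\kappa$-complete ideal $J$) shows that $g$ is in fact a $<_J$-\emph{exact} upper bound of $\vec{f}\restr\eta$: whenever $h<_J g$, there is $\alpha<\eta$ with $h<_J f_\alpha$. Define $\tau_i := \cf(g(i))$, again modifying on a $J$-null set to arrange that each $g(i)$ is a limit ordinal and each $\tau_i$ is a regular cardinal strictly between $\cf(\mu)$ and $\lambda_i$; this secures clause~(2). For clause~(1), pick for each $i$ a cofinal increasing $\pi_i\colon\tau_i\to g(i)$ and set $\tilde{f}_\alpha(i) := \min\{\zeta<\tau_i : \pi_i(\zeta)\geq f_\alpha(i)\}$. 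Then $\langle\tilde{f}_\alpha:\alpha<\eta\rangle$ is $<_J$-increasing because $\vec{f}$ is, and exactness of $g$ yields $<_J$-cofinality in $\prod_i\tau_i/J$, so $(\bar{\tau},J)$ represents $\eta$ at $\lambda$.

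The delicate part---and, I expect, the main obstacle to making the plan rigorous---is the third admissibility clause for $\bar{\tau}$, namely $\{i:\tau_i\leq\mu'\}\in J$ for every $\mu'<\lambda$. The standard route is to argue that if this set is $J$-positive for some $\mu'$, then on it the pointwise cofinalities $\cf(g(i))$ are bounded by $\mu'$; combining this with the exactness of $g$ and the admissibility of the original pair $(\bar{\lambda},J)$ (which forces $\lambda_i$ to be $J$-cofinally unbounded in $\lambda$), one produces an element $h<_J g$ that cannot be dominated by any $f_\alpha$ for $\alpha<\eta$, contradicting exactness. Once this clause is in hand, $\bar{\tau}$ is the desired $J$-admissible sequence representing $\eta$ at $\lambda$, and both conclusions (1) and (2) are verified.
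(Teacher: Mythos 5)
The paper's own proof is essentially a citation: it verifies that $\prod_{i<\kappa}\lambda_i/J$ is $\eta^+$-directed and that no $J$-positive restriction can have true cofinality $\eta$ (since any such restriction still has true cofinality $\chi$), and then invokes Shelah's ``No Holes'' result (Claim~1.4(1) of Chapter~II of \emph{Cardinal Arithmetic}).  You instead attempt to reprove the No Holes theorem from scratch via the exact-upper-bound machinery, which is a legitimate alternative route, but your sketch has real gaps at exactly the points where that machinery is delicate.

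The main problem is the passage from ``bounded sequence'' to ``exact upper bound.''  Well-foundedness of $\prod_i \ord/J$ only yields a $<_J$-\emph{minimal} upper bound of $\vec{f}\restriction\eta$, not a $<_J$-\emph{least} one; these are different, and least upper bounds need not exist.  More seriously, the assertion that $\cf(\eta)>\kappa$ alone upgrades an upper bound to an exact upper bound is not a standard result and is not true in general: the relevant tool is Shelah's trichotomy theorem, which applies when $\cf(\eta)>\kappa^+$ (which does hold here, since $\eta>\lambda>\lambda_i>\kappa$) but only guarantees an eub after one rules out the two ``bad'' branches of the trichotomy.  Ruling those out requires using the $\chi$-directedness of $\prod\lambda_i/J$ (or equivalently that $\vec{f}$ is cofinal), which you never invoke at this stage; that is precisely the content of the No Holes argument you are trying to reprove.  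Two smaller points: the sequence $\langle \tilde f_\alpha:\alpha<\eta\rangle$ you define is only $\leq_J$-increasing, and you must pass to a cofinal subsequence of $\prod\tau_i/J$ to extract a genuinely $<_J$-increasing scale of length $\eta$; and you explicitly defer the verification of the third admissibility clause for $\bar\tau$, which is not a cosmetic omission.  In short, your plan tracks a correct proof strategy, but the eub-existence step is the crux and needs either the full trichotomy analysis or the citation the paper uses.
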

\begin{proof}
Given such a regular cardinal $\eta$, we know that $\prod_{i<\kappa}\lambda_i$ is $\eta$-directed modulo $J$
because in fact it is $\chi$-directed.  If $X$ is a $J$-positive subset of~$\kappa$, then we know  that $\prod_{i\in X}\lambda_i$
cannot have true cofinality $\tau$ modulo the ideal $J\restr X$ for the trivial reason that it must still have true cofinality
$\chi$, and therefore Claim~1.4(1) of Chapter II in \cite{cardarith} tells us there is a sequence $\langle \tau_i:i<\kappa\rangle$ as desired.
\end{proof}

The second basic fact we need connects representability back to $\pcf(A)$ for $A$ a progressive set of regular cardinals.

\begin{fact}
Suppose $A$ is a progressive set of regular cardinals, and let $\sigma\leq|A|$ be an infinite regular cardinal.  If $\chi$ is in $\pcfsig(A)$, then there is a singular cardinal $\lambda$ such that
\begin{equation}
\sigma\leq\cf(\lambda)\leq|A| <\lambda<\chi
\end{equation}
and an admissible pair $(\bar{\lambda}, J)$ at $\lambda$ such that
\begin{itemize}
\item each $\lambda_i$ is in $\pcf(A)\cap\lambda$, and
\item the pair $(\bar{\lambda}, J)$ represents $\chi$ and $\lambda$.
\end{itemize}
\end{fact}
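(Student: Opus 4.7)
\emph{Plan.} The strategy is to unpack the $\sigma$-completeness witness for $\chi\in\pcfsig(A)$, apply Shelah's pcf localization theorem to replace $A$ by a small subset $A_0\subseteq\pcf(A)\cap\chi$ that still has $\chi$ in its pcf, and finally thin to a cofinal $\sigma$-sequence below a singular $\lambda<\chi$ from which we read off $\bar{\lambda}$ and $J$.

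\emph{Localization.} Fix a $\sigma$-complete ultrafilter $D$ on $A$ with $\tcf(\prod A/D)=\chi$. Progressivity of $A$ together with Shelah's localization theorem produces a set $A_0\subseteq\pcf(A)\cap\chi$ of cardinality at most $|A|$ with $\chi\in\pcf(A_0)$. The standard construction proceeds via the transitive generators $B_\tau[A]$ of the ideals $J_{<\tau}[A]$ for appropriate $\tau<\chi$, and the ultrafilter $D$ naturally induces a $\sigma$-complete ultrafilter $D_0$ on $A_0$ with $\tcf(\prod A_0/D_0)=\chi$, so $\chi\in\pcfsig(A_0)$ as well.

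\emph{Extraction and main obstacle.} Put $\lambda:=\sup A_0$. Since $\chi$ is regular and $|A_0|\leq|A|<\chi$ we have $\lambda<\chi$; since every element of $A_0$ lies in $\pcf(A)$ and $\min\pcf(A)\geq\min A>|A|$, we have $\lambda>|A|$. Throwing $\lambda$ out of $A_0$ if it happens to lie there makes $\lambda$ singular with $\cf(\lambda)\leq|A|$. The $\sigma$-completeness of $D_0$ forces $\cf(\lambda)\geq\sigma$: otherwise one could cover $A_0$ by fewer than $\sigma$ pieces bounded below $\lambda$, one of which would lie in $D_0$, contradicting the pcf bounds applied to a bounded progressive subset. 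The genuine obstacle is that admissibility demands $\cf(\lambda)$-completeness of $J$, while the dual of $D_0$ delivers only $\sigma$-completeness. We overcome this by arranging $\cf(\lambda)=\sigma$ from the outset: choose $\bar{\lambda}=\langle\lambda_i:i<\sigma\rangle\subseteq\pcf(A)\cap\chi$ cofinal in some $\lambda$ with $|A|<\lambda<\chi$, and let $J$ be the dual of the pushforward of $D_0$ along this enumeration, verifying that $J$ is proper, $\sigma$-complete ($=\cf(\lambda)$-complete), extends the bounded ideal on $\sigma$ (by cofinality of the $\lambda_i$ in $\lambda$), and still yields $\tcf(\prod_{i<\sigma}\lambda_i/J)=\chi$. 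This last step---carrying out the localization while controlling $\cf(\sup A_0)$---is where the proof must do real work; everything else amounts to bookkeeping $\sigma$-completeness through the standard pcf constructions.
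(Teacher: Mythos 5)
The paper gives no proof of this Fact: it simply cites Claim~2.7 of Shelah's \cite{460}, so there is nothing in the paper against which to compare your argument step by step. What can be said is that you have correctly located the crux (the paper even flags it in the parenthetical remark after the Fact): the filter obtained from $\sigma$-completeness alone is only $\sigma$-complete, while admissibility at $\lambda$ demands a $\cf(\lambda)$-complete ideal on $\cf(\lambda)$, and a priori $\cf(\lambda)$ may be strictly larger than $\sigma$.

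However, your proposal does not actually close that gap. The sentence ``we overcome this by arranging $\cf(\lambda)=\sigma$ from the outset: choose $\bar{\lambda}=\langle\lambda_i:i<\sigma\rangle\subseteq\pcf(A)\cap\chi$ cofinal in some $\lambda$ \dots and let $J$ be the dual of the pushforward of $D_0$'' is a restatement of the goal, not a construction: you must explain why there exists a $\lambda$ of cofinality exactly $\sigma$ (rather than some larger cofinality $\leq|A|$) together with a cofinal $\sigma$-sequence in $\pcf(A)\cap\lambda$ whose reduced product modulo a $\sigma$-complete ideal has true cofinality $\chi$. Nothing you have said rules out the bad case in which every such singular $\lambda<\chi$ arising from the localization has $\cf(\lambda)>\sigma$, in which case a $\sigma$-complete ideal is not good enough. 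There is a second, smaller gap earlier: you assert that localization hands you a $\sigma$-complete ultrafilter $D_0$ on $A_0$ with $\tcf(\prod A_0/D_0)=\chi$, but the standard localization theorem only gives $\chi\in\pcf(A_0)$; the claim that $\chi\in\pcf_{\sigma\text{-com}}(A_0)$ as well requires an argument. A cleaner route to the cofinality bound $\sigma\le\cf(\lambda)$, and the one visible in the paper's own Lemma~\ref{setup}, is to work with the $\sigma$-complete ideal $I$ on $A$ generated by $J_{<\chi}[A]$ and take $\lambda$ minimal with $A\cap\lambda\notin I$; but even that only settles the lower bound on $\cf(\lambda)$, and the passage from a $\sigma$-complete ideal to a genuinely $\cf(\lambda)$-complete one is precisely the nontrivial content of Shelah's Claim~2.7 that remains to be supplied.
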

\begin{proof}
This is Claim 2.7 in \cite{460}.
\end{proof}
(For the above fact, the non-trivial issue is the requirement that the completeness of the filter must match the cofinality of the singular cardinal involved. If we are content with a $\sigma$-complete filter on $\kappa\geq\sigma$ then things are much easier.)

\section{The main theorem}

We remind the reader of our goal:

\begin{mainthm} Suppose $\mu$ is a singular cardinal such that whenever $A$ is a set of regular cardinals satisfying
$|A|<\mu<\min(A)$, we have $|\pcf(A)|<\mu$.
Then for every $\lambda>\mu$ there is a $\sigma<\mu$ such that
\begin{equation}
A\subseteq (\mu,\lambda)\cap\reg\text{ and }|A|<\mu\Longrightarrow \pcfsig(A)\subseteq\lambda.
\end{equation}
\end{mainthm}

The proof of this theorem will occupy the remainder of the paper, which we try to break up in subsections in an intelligible way.

\subsection{The setup}

The proof is by contradiction, so let us assume the theorem fails for a cardinal $\mu$ satisfying the assumption of the theorem, and let $\lambda$ be the least counterexample for $\mu$.   Thus, we are assuming that the following two statements hold:

\bigskip

\noindent\Circled{1} If $\mu<\lambda'<\lambda$ then there is a regular cardinal $\sigma<\mu$ such that 
\begin{equation}
A\subseteq(\mu,\lambda')\cap\reg\text{ and }|A|<\mu\Longrightarrow\pcfsig(A)\subseteq\lambda',
\end{equation}

\medskip

\noindent and

\medskip

\noindent\Circled{2} For every regular $\sigma<\mu$ there is an $A\subseteq(\mu,\lambda)\cap\reg$ such that $|A|<\mu$ but
\begin{equation}
\label{3.3}
\pcfsig(A)\nsubseteq\lambda.
\end{equation}

It follows easily from these two assumptions that $\lambda$ is singular and $\cf(\lambda)=\cf(\mu)$, as the failure of this condition would allow us to find a suitable $\sigma$ that works for $\lambda$ as well.   The following lemma digs a little deeper into the situation. 

\begin{lemma}
\label{setup} Suppose $\sigma<\mu<\epsilon<\lambda$.  Then we can find a singular cardinal $\lambda^*$ and an admissible pair $(\bar{\lambda}, J)$ at $\lambda^*$ such that
\begin{itemize}
\item $\sigma\leq\cf(\lambda^*)<\mu<\epsilon<\lambda^*<\lambda$, and
\item $(\bar{\lambda}, J)$ is an admissible pair at $\lambda^*$ representing $\lambda^+$.
\end{itemize}
\end{lemma}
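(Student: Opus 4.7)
The plan is to combine the two setup assumptions \Circled{1} and \Circled{2} with Facts~1 and~2. Since $\mu$ is singular and $\lambda$ is a singular limit cardinal, we have $\cf(\mu)<\mu$ and, assuming without loss of generality that $\epsilon$ is a cardinal, $\epsilon^+<\lambda$. I first apply \Circled{1} with $\lambda'=\epsilon^+$ to obtain some $\sigma_0<\mu$ witnessing the conclusion for that $\lambda'$, and fix a regular cardinal $\sigma^*$ with $\max(\sigma,\sigma_0,\cf(\mu)^+)\le\sigma^*<\mu$; this is possible because $\mu$ is a limit cardinal.

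Next, I invoke \Circled{2} at $\sigma^*$ to obtain $A\subseteq(\mu,\lambda)\cap\reg$ with $|A|<\mu$ and some $\chi\in\pcf_{\sigma^*\text{-com}}(A)\setminus\lambda$; since $\chi$ is regular and $\lambda$ is singular, $\chi\ge\lambda^+$. I split $A=A_1\sqcup A_2$ with $A_1=A\cap(\mu,\epsilon^+)$ and $A_2=A\cap[\epsilon^+,\lambda)$. The standard distributivity of $\pcf_{\sigma\text{-com}}$ over disjoint unions, combined with the containment $\pcf_{\sigma^*\text{-com}}(A_1)\subseteq\pcf_{\sigma_0\text{-com}}(A_1)\subseteq\epsilon^+$ (from $\sigma^*\ge\sigma_0$ and the choice of $\sigma_0$), forces $\chi\in\pcf_{\sigma^*\text{-com}}(A_2)$. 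The set $A_2$ is progressive, and $|A_2|\ge\sigma^*$ (otherwise every $\sigma^*$-complete ultrafilter on $A_2$ would be principal, so $\pcf_{\sigma^*\text{-com}}(A_2)=A_2\subseteq\lambda$, contradicting $\chi\ge\lambda^+$). Applying Fact~2 to $(A_2,\sigma^*,\chi)$ now produces a singular cardinal $\lambda^*$ and an admissible pair $(\bar\tau,J)$ at $\lambda^*$ satisfying $\sigma^*\le\cf(\lambda^*)\le|A_2|<\lambda^*<\chi$, with each $\tau_i\in\pcf(A_2)\cap\lambda^*$ and $(\bar\tau,J)$ representing $\chi$ at $\lambda^*$.

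Three checks place $\lambda^*$ in the desired interval $(\epsilon,\lambda)$. The lower bound $\lambda^*>\epsilon$ is immediate, since each $\tau_i\ge\min(A_2)\ge\epsilon^+$ and admissibility gives $\tau_i<\lambda^*$. The cofinality bounds $\sigma\le\cf(\lambda^*)<\mu$ follow directly from Fact~2 and the choice of $\sigma^*$. The inequality $\lambda^*\ne\lambda$ follows from $\cf(\lambda^*)\ge\sigma^*>\cf(\mu)=\cf(\lambda)$. Once $\lambda^*<\lambda$ has been established, the regular cardinal $\lambda^+$ lies in $(\lambda^*,\chi]$; applying Fact~1 with $\eta=\lambda^+$ (if $\chi>\lambda^+$), or keeping $(\bar\tau,J)$ itself (if $\chi=\lambda^+$), yields the required admissible pair at $\lambda^*$ representing $\lambda^+$.

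The genuine obstacle is to rule out $\lambda^*>\lambda$, since Fact~2 only places $\lambda^*$ inside $(|A_2|,\chi)$ and our cofinality argument alone does not exclude a singular $\lambda^*$ lying above $\lambda$. If that case occurred, admissibility applied to the threshold $\lambda$ would force $\tau_i>\lambda$, hence $\tau_i\ge\lambda^+$, for $J$-almost all $i$, so $B:=\{\tau_i:\tau_i>\lambda\}$ would be a progressive subset of $\pcf(A_2)\cap(\lambda,\lambda^*)\cap\reg$ with $\chi\in\pcf_{\sigma^*\text{-com}}(B)$. A contradiction ought then to come from combining the hypothesis of the main theorem (which bounds $|\pcf(B)|<\mu$) with a further application of \Circled{1} adapted to the bounded portion of $\pcf(A_2)\cap\lambda$; this is the delicate step where most of the proof effort resides.
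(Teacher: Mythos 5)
Your proof correctly handles the easy parts (choosing $\sigma^*>\cf(\mu)$ large enough, splitting $A$ into $A_1\cup A_2$, distributing $\pcf_{\sigma^*\text{-com}}$ over the union, and applying Fact~1 at the end to convert a representation of $\chi$ into one of $\lambda^+$), but it has a genuine gap exactly where you suspect one: ruling out $\lambda^*>\lambda$. The difficulty is structural, not just unfinished bookkeeping. You apply Fact~2 with target $\chi=\min(\pcf_{\sigma^*\text{-com}}(A)\setminus\lambda)$, which can be far above $\lambda^+$, so Fact~2 only gives $\lambda^*<\chi$ and says nothing about $\lambda^*$ versus $\lambda$. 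Your proposed repair also cannot work as sketched: the set $B=\{\tau_i:\tau_i>\lambda\}$ lives in $\pcf(A_2)\cap(\lambda,\lambda^*)$, i.e.\ \emph{above} $\lambda$, and neither \Circled{1} nor the hypothesis on $\mu$ constrains the pcf of a set of regular cardinals above $\lambda$ in a way that would make $\chi\in\pcf_{\sigma^*\text{-com}}(B)$ contradictory. (The bound $|\pcf(B)|<\mu$ is compatible with $B$ being cofinal in a singular $\lambda^*>\lambda$ of cofinality between $\sigma^*$ and $\mu$.)

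The paper's proof closes this gap by \emph{first} converting $\chi$ to $\lambda^+$, and only then invoking Fact~2. Concretely: let $I$ be the $\sigma$-complete ideal on $A$ generated by $J_{<\chi}[A]$, and let $\xi\leq\lambda$ be minimal with $A\cap\xi\notin I$. A short completeness argument shows $\xi$ is singular with $\sigma\leq\cf(\xi)\leq|A|<\mu$, so $\cf(\xi)\neq\cf(\lambda)$ and hence $\xi<\lambda$; moreover $\epsilon<\xi$ by the choice of $\sigma$. Since $\chi\geq\lambda^+$ witnesses $\pp_{\Gamma(|A|^+,\sigma)}(\xi)\geq\lambda^+$, the No Holes theorem (Chapter~II, Conclusion~2.3(3A) of \cite{cardarith}) produces a set $B$ cofinal in $\xi\cap\reg$ of size at most $|A|$ and a $\sigma$-complete ideal $J'$ on $B$ extending the bounded ideal with $\lambda^+=\tcf\left(\prod B/J'\right)$, and one can trim $B$ so $\min(B)>\epsilon$. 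Now Fact~2 is applied with target $\lambda^+$ rather than $\chi$, which forces $\lambda^*<\lambda^+$, i.e.\ $\lambda^*\leq\lambda$; combined with $\cf(\lambda^*)\geq\sigma>\cf(\lambda)$ this gives $\lambda^*<\lambda$, and $\lambda^*>\min(B)>\epsilon$ gives the lower bound. That pass through No Holes -- which pulls the representation down to $\lambda^+$ at a singular $\xi$ strictly below $\lambda$ before Fact~2 is ever invoked -- is the step your argument is missing and cannot do without.
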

In other words, the cardinal $\lambda^+$ is representable on some cofinality $\kappa$ satisfying $\sigma\leq\kappa<\mu$ at some singular cardinal $\lambda^*$ such that $\epsilon<\lambda^*<\lambda$.

\begin{proof}
First, we may freely increase $\sigma$ below $\mu$ if needed, so without loss of generality we may assume that $\sigma$ is a regular cardinal satisfying $\cf(\mu)=\cf(\lambda)<\sigma<\mu$, and furthermore, by~\Circled{1}, that
\begin{equation}
\label{blah}
A\subseteq(\mu,\epsilon)\cap\Reg\text{ and }|A|<\mu\Longrightarrow\pcfsig(A)\subseteq\epsilon.
\end{equation}

It suffices to find a set $B\subseteq(\epsilon,\lambda)\cap\reg$ of cardinality at most $|A|$ such that
\begin{equation}
\lambda^+=\max\pcf(B),
\end{equation}
and
\begin{equation}
\lambda^+\in\pcfsig(B).
\end{equation}
If we are able to do this, then Fact~2 will give us what we need: if $\lambda^*$ is the resulting singular cardinal, then $\lambda^*$ cannot be greater than $\lambda$ because of the associated admissible representation of $\lambda^+$, and $\lambda^*$ cannot equal $\lambda$ because the cofinality of $\lambda^*$ is at least $\sigma$.

Now how do we find such a $B$? By \Circled{2}, there is an $A\subseteq(\mu,\lambda)\cap\reg$ such that $|A|<\mu$ but~(\ref{3.3}) holds. Define
\begin{equation}
\chi:=\min\left(\pcfsig(A)\setminus\lambda\right),
\end{equation}
and let $\xi\leq\lambda$ be minimal such that $A\cap\xi$ is not in the $\sigma$-complete ideal $I$ on $A$ generated by $J_{<\chi}[A]$.  
 
It follows that $\xi$ is a singular cardinal less than or equal to $\lambda$ such that
\begin{equation}
\cf(\lambda)<\sigma\leq\cf(\xi)\leq|A|<\mu,
\end{equation}
and so in particular, $\xi<\lambda$.  By~(\ref{blah}), we know $\epsilon<\xi$ as well.  Since $A\cap\epsilon\in I$, we can remove this piece of $A$ if necessary and just assume $\epsilon<\min(A)$. 

We now employ some standard material from Chapter~II of~\cite{cardarith} to find our set $B$.  The cardinal $\chi$ witnesses that $\pp_{\Gamma(|A|^+,\sigma)}(\xi)$ is greater than $\lambda^+$, so by the No Holes Theorem (Conclusion 2.3(3A) of Chapter II in~\cite{cardarith}) we know there is a set $B$ and an ideal $J'$ such that

\begin{itemize}
\item $B$ is a cofinal subset of $\xi\cap\reg$ of cardinality at most $|A|$,

\sk 

\item $J'$ is a $\sigma$-complete ideal on $B$ extending the bounded ideal, and

\sk 

\item $\lambda^+ = \tcf\left(\prod B / J'\right)$.
\end{itemize}
Since $\epsilon<\xi$ and $J'$ includes all initial segments of $B$, we can assume that $B\cap\xi = \emptyset$ and then finish as outlined above using \Circled{2}.
\end{proof}

\subsection{The main lemma}

We will eventually derive our contradiction by finding a singular $\lambda^*$ in the interval $(\mu,\lambda]$ of uncountable cofinality~$\kappa$ with the property that for some $\kappa$-complete proper ideal on $\kappa$ extending the bounded ideal, the set of $J$-admissible sequences that represent $\lambda^+$ modulo $J$ is non-empty, but without a $<_J$-minimal member.  Since $J$ is countably complete, this is an impossibility.   The following lemma contains most of the hard work necessary to arrive at the conclusion.

\begin{lemma}
\label{mainlemma}
Suppose $(\bar\lambda, J)$ is an admissible pair representing $\lambda^+$ at some singular cardinal $\lambda^*$ satisfying $\mu<\lambda^*<\lambda$.  If there is a single regular $\sigma<\mu$ such that
\begin{equation}
\label{lemass}
(\forall^Ji<\kappa)\left[A\subseteq (\mu,\lambda_i)\cap\reg\wedge |A|<\mu\Longrightarrow \pcfsig(A)\subseteq\lambda_i\right].
\end{equation}
(We say that $\sigma$ {\em works for} the sequence $\bar\lambda$.)
Then we can find another $J$-admissible sequence $\bar{\psi}$ representing $\lambda^+$ at $\lambda^*$ such that
\begin{equation}
(\forall^Ji<\kappa)[\psi_i<\lambda_i].
\end{equation}
\end{lemma}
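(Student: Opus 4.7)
I would try to realize $\bar\psi$ as the pointwise cofinalities of an exact upper bound (eub) of a $<_J$-cofinal sequence in $\prod_i\lambda_i$, and use the hypothesis that $\sigma$ works for $\bar\lambda$ to rule out the eub agreeing with $\bar\lambda$ on a $J$-positive set.

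Concretely, fix a $<_J$-increasing, $<_J$-cofinal sequence $\bar f=\langle f_\alpha:\alpha<\lambda^+\rangle$ in $\prod_{i<\kappa}\lambda_i$. Since $J$ is countably complete, $\prod_i\ord/J$ is well-founded, and a minimality argument produces an eub $\psi$ of $\bar f$ with $\psi(i)\leq\lambda_i$ for $J$-almost all~$i$. Set $\psi_i:=\cf(\psi(i))$; then $\psi_i$ is a regular cardinal $\leq\lambda_i$, and by standard eub theory $\tcf(\prod_i\psi_i/J)=\lambda^+$, so $\bar\psi$ represents $\lambda^+$ modulo $J$. Admissibility of $\bar\psi$ at $\lambda^*$ is routine after discarding a $J$-null correction (using $\sigma\leq\cf(\lambda^*)$ and admissibility of the original $\bar\lambda$).

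Everything then reduces to showing that the set $X:=\{i:\psi(i)=\lambda_i\}$ lies in $J$. Suppose for contradiction $X\in J^+$, and by shrinking assume $\sigma$ works for $\lambda_i$ at every $i\in X$. Using the eub property, for each $i\in X$ the values $f_\alpha(i)$ are cofinal in $\lambda_i$ in the appropriate $J$-sense. One must now produce, for some $i_*\in X$, a set $A\subseteq(\mu,\lambda_{i_*})\cap\reg$ with $|A|<\mu$ and a $\sigma$-complete ultrafilter $D$ on $A$ with $\tcf(\prod A/D)\geq\lambda_{i_*}$, directly contradicting the hypothesis at~$i_*$.

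The main obstacle is constructing this witness. A naive attempt---taking $A=\{\lambda_j:j\in X\cap i_*\}$---fails, since admissibility of $\bar\lambda$ puts $\{j:\lambda_j<\lambda_{i_*}\}$ into $J$, so the candidate index set is $J\restr X$-null. Instead $A$ must be assembled from richer data: one natural route is to combine columns of $\bar f$ at indices in $X$ below $i_*$ with the generator machinery for $\pcf(\mathrm{range}(\bar\lambda))$, or to apply Fact~2 to a carefully chosen $\pcfsig$ computation inside $(\mu,\lambda_{i_*})\cap\reg$. This extraction is the genuine pcf content of the lemma, and it is where the $\sigma$-works hypothesis is used in earnest; the remaining steps (existence of the eub, admissibility of $\bar\psi$, representation of $\lambda^+$) are standard.
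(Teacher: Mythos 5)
Your approach has a fatal structural problem, independent of the step you flag as "the main obstacle."  You fix $\bar f$ to be $<_J$-increasing and $<_J$-\emph{cofinal} in $\prod_{i<\kappa}\lambda_i$, and then look for an exact upper bound $\psi$ of $\bar f$ with $\psi(i)\leq\lambda_i$ almost everywhere.  But if $\bar f$ is cofinal in $\prod_i\lambda_i$ modulo $J$, then $\bar\lambda$ itself \emph{is} the exact upper bound of $\bar f$ (it bounds the sequence, and any $g<_J\bar\lambda$ can be truncated into $\prod_i\lambda_i$ and hence is dominated by some $f_\alpha$), and eubs are unique modulo $J$.  So $\psi=_J\bar\lambda$, each $\lambda_i$ is regular, and $\psi_i:=\cf(\psi(i))=\lambda_i$ for $J$-almost all $i$.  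Your target set $X=\{i:\psi(i)=\lambda_i\}$ is therefore $J$-equivalent to $\kappa$, and the reduction "everything then reduces to showing $X\in J$" is asking you to prove something false.  The point the lemma is making is precisely that a \emph{different} scale, sitting strictly below $\bar\lambda$ pointwise mod $J$, can also have true cofinality $\lambda^+$; such a $\bar\psi$ is not and cannot be the eub of a sequence cofinal in $\prod_i\lambda_i$, since no function $<_J\bar\psi$ can dominate $\bar\psi$ itself.

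Beyond that, the part you acknowledge you cannot complete is in fact the whole content of the lemma, and the paper's route to it does not pass through eubs at all.  Its proof first applies Lemma~\ref{setup} to get a second representation $(\bar\chi,I)$ of $\lambda^+$ at a singular cardinal $\chi^*\in(\lambda^*,\lambda)$ whose cofinality $\theta$ is chosen large enough that $|\pcf(A)|<\theta$ for the relevant small sets $A$.  Each $\chi_\zeta$ is then pulled back via Fact~1 to a $J$-admissible sequence $\bar\tau^\zeta$ strictly below $\bar\lambda$, and the double-indexed family is organized as a $\theta\times\kappa$ matrix.  The hypothesis that $\sigma$ works is applied \emph{columnwise}, via $\sigma$-complete pcf-compactness, to produce small sets of generators $C_i$; then the $\theta$-completeness of $I$ together with $|\pcf(\bigcup_iC_i)|<\theta$ yields a single row $\zeta^*$ that "runs away" from all the relevant generators.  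The desired $\bar\psi$ is read off from that row.  None of this can be recovered from the eub machinery you propose, so the proposal is not a variant of the paper's argument with a gap — it is a different strategy that cannot work as stated.
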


This isn't quite enough to get us a contradiction as there need not be a single $\sigma$ that works for the new sequence $\bar{\psi}$.  We will take care of this problem later on, when we split $\kappa$ into disjoint pieces on which the preceding lemma will apply, and then combine the resulting sequences to find our contradiction.

\begin{proof}

Given $\lambda^*$ and $(\bar{\lambda}, J)$, let $\kappa = \cf(\lambda^*)$.   Since increasing $\sigma$ below $\mu$ maintains our assumption, we may assume that $\sigma$ is larger than $\kappa$.  As noted at the end of the first section, our assumption on $\mu$ lets us fix a regular cardinal $\theta<\mu$ so large that $\theta<\mu$ so large that
\begin{equation}
A\subseteq(\mu,\lambda)\cap\reg\text{ and } |A|<\sigma\Longrightarrow |\pcf(A)|<\theta.
\end{equation}

By Lemma~\ref{setup} (and by increasing $\theta$ if needed) we can find a singular cardinal $\chi^*$ of cofinality $\theta$ such that $\lambda^*<\chi^*<\lambda$, and $\lambda^+$ can be represented by an admissible pair $(\bar{\chi},I)$ at $\chi^*$.  Since $I$ extends the bounded ideal on $\theta$, we may assume that $\lambda^*$ is less than each $\chi_i$ as well.

Now the idea is to use Fact 1, and represent each $\chi_i$ by a $J$-admissible sequence at $\lambda^*$ that is less than our original sequence $\bar{\lambda}$ modulo $J$.  More precisely, for each $\zeta<\theta$ we find a $J$- admissible sequence $\bar{\tau}^\zeta = \langle
\tau^\zeta_i:i<\kappa\rangle$ at $\lambda^*$  such that
\begin{equation}
(\forall^Ji<\kappa)[\tau^\zeta_i<\lambda_i]
\end{equation}
and
\begin{equation}
\tcf\left(\prod_{i<\kappa}\tau^\zeta_i/J\right)=\chi_\zeta.
\end{equation}
Now define
\begin{equation}
A: = \{\tau^\zeta_i:i<\kappa\text{ and }\zeta<\theta\}.
\end{equation}
Note that the various $\tau^\zeta_i$ need not be distinct, but our set $A$ is just the union of the ranges of the sequences
$\bar{\tau}^\zeta$ for $\zeta<\theta$.

Let's take a look at how the pcf-structure of $A$ reflects to the index set $\kappa\times\theta$.  We want to think of $A$ as
being presented like a matrix (with possible repetitions among entries), so for each $\zeta<\theta$ we have a corresponding row
\begin{equation}
\row(\zeta) = \langle \tau^\zeta_i:i<\kappa\rangle,
\end{equation}
and for each $i<\kappa$ we have a corresponding column
\begin{equation}
\col(i) = \{\tau^\zeta_i:\zeta<\theta\}.
\end{equation}
Note a subtle distinction in the notation: it will be to our advantage to view the rows as being admissible sequences of regular cardinals, while thinking of the columns as just being sets of regular cardinals.

Our next step is a bit of housekeeping.  We are going to redefine some of $\tau^\zeta_i$ to a new value of $\mu^+$ with an eye
towards achieving the following:
\begin{enumerate}
\item $\row(\zeta)$ will remain an admissible sequence representing $\chi_\zeta$ at $\lambda^*$ modulo $J$, and

\sk

\item $\col(i)$ will be a subset of $[\mu^+,\lambda_i)\cap\reg$ when $\mu^+<\lambda_i$.

\end{enumerate}

We can do this by brute force:  the set of $i$ for which $\lambda_i = \mu^+$ is in $J$, and for each $\zeta<\theta$ the set of
$i$ for which $\lambda_i\leq\tau_i^\zeta$ is in $J$.  For convenience, we may assume $\tau^0_0 = \mu^+$ so that $\mu^+\in A$.

More housekeeping follows, and this time we things so that $\max\pcf(A)=\lambda^+$.   Since $|\pcf(A)|<\mu<\min(A)$,
we can fix a transitive collection of generators $\langle B_\psi:\psi\in\pcf(A)\rangle$ for $\pcf(A)$ (so the
generators are subsets of $\pcf(A)$ and not just $A$).  We may assume that $\mu^+=\min(A)\in B_{\lambda^+}$.

Each $\chi_\zeta$ is in $\pcf(A)$, and certainly $\lambda^+$ is in $\pcf(A)$ as well.  For $\zeta<\theta$, the generator
$B_{\chi_\zeta}$ contains $\tau_i^\zeta$ for $J$-almost all $i<\kappa$.  Similarly, the generator $B_{\lambda^+}$ contains
$\chi_\zeta$ for $I$-almost all $\zeta$. Putting these together, we see

\begin{equation}
(\forall^I\zeta<\theta)(\forall^J i<\kappa)\left[\tau^\zeta_i\in B_{\lambda^+}\right].
\end{equation}

Now choose $\zeta^*$ such that $\chi_{\zeta^*}\in B_{\lambda^+}$.  If $\zeta<\theta$ is a place where $\chi_\zeta\notin
B_{\lambda^+}$, we redefine $\chi_\zeta$ to be $\chi_{\zeta^*}$, and redefine $\tau^\zeta_i$ to be $\tau^{\zeta^*}_i$, so that
now row $\zeta$ will now represent $\chi_{\zeta^*}$.  Since this affects only an $I$-small set of rows, our sequence $\langle
\chi_\zeta:\zeta<\theta\rangle$ will still represent $\lambda^+$ modulo~$I$, and we have improved our situation to obtain

\begin{equation}
(\forall \zeta<\theta)(\forall^J i<\kappa)\left[\tau^\zeta_i\in B_{\lambda^+}\right].
\end{equation}

By the above, for each  $\zeta<\theta$, the set of $i<\kappa$ for which $\tau^\zeta_i$ is NOT in $B_{\lambda^+}$ is in the ideal~$J$.  We redefine $\tau^\zeta_i$ to be $\mu^+$ in that situation.  Again, this modifies row $\zeta$ on a
$J$-small set, so that $\row(\zeta)$ still has the required properties, and we've made sure that $\max\pcf(A) = \lambda^+$ as our
``new'' $A$ is a subset of the original $B_{\lambda^+}$.

To summarize our efforts, we have arranged that our collection $A$ is a set of regular cardinals in
$(\mu,\lambda^*)$ of cardinality less than $\mu$ such that $\max\pcf(A)=\lambda^+$, and such that $A$ can be organized into
$\theta$ rows and $\kappa$ columns such that
\begin{itemize}
\item for each $\zeta<\theta$, row $\zeta$ represents $\chi_\zeta$ modulo $J$.

\sk

\item for each $i<\kappa$, column $i$ is a subset of $\{\mu^+\}\cup [\mu^+,\lambda_i)\cap\reg$

\sk

\end{itemize}

Now look back at our assumption~(\ref{lemass}).  If $i<\kappa$ indexes a column for which $\lambda_i>\mu^+$ and $\sigma$ ``works for'' $\lambda_i$ (which the case for almost all columns modulo
$J$), then because $\col(i)$ is a subset of $(\mu,\lambda_i)\cap\reg$ of cardinality less than $\mu$ we have
\begin{equation}
\pcfsig(\col(i))\subseteq (\mu,\lambda_i)\cap\reg.
\end{equation}
By the $\sigma$-complete version of pcf-compactness (see \cite{430} Claim 6.7F (2)), there is a set $C_i$ such that
\begin{itemize}
\item $C_i\subseteq\pcfsig(\col(i))\subseteq (\mu,\lambda_i)\cap\reg$,

\sk

\item $|C_i|<\sigma$, and

\sk

\item $\col(i)\subseteq\bigcup_{\psi\in C_i}B_\psi$.

\sk

\end{itemize}

Let $C = \bigcup_{i<\kappa}C_i$.   We need to note a few things.  First, the cardinality of $C$ is less than $\sigma$, as it is a
union of $\kappa$ sets, each of cardinality less than~$\sigma$.  Given our choice of $\theta$, this implies
\begin{equation}
|\pcf(C)|<\theta.
\end{equation}

For $\psi\in C$, we know $\max\pcf(B_\psi)=\psi<\lambda^+$, and that means that
\begin{equation}
(\forall^I \zeta<\theta)(\forall^J i<\kappa)\left[\tau^\zeta_i\notin B_\psi\right].
\end{equation}
This is easy to see:  if a set $X\subseteq A$ meets $\row(\zeta)$ on an $J$-positive set, then $\chi_\zeta\in\pcf(X)$.  If this
happens for an $I$-positive set of $\zeta<\theta$, then we get $\lambda^+\in \pcf(\pcf(X))=\pcf(X)$.   Said another way, if a
subset $X$ of $A$ is in $J_{<{\lambda^+}}[A]$, then for $I$-almost all rows $\zeta$, the set of $i<\kappa$ with $\tau^\zeta_i\in
X$ is in $J$.

Now comes the critical place where we use the fact that $|\pcf(C)|$ is less than $\theta$, the completeness of $I$.  For each $\psi\in\pcf(C)\cap\lambda$,
let
\begin{equation}
X_\psi = \{\zeta<\theta: \{i<\kappa: \tau^\zeta_i\in B_\psi\}\notin I\},
\end{equation}
that is, $X_\psi$ is the collection of $\zeta<\theta$ for which $\row(\zeta)$ meets $B_\psi$ on a $J$-large set.  Since
$\max\pcf(B_\psi)=\psi<\lambda^+$, it follows that
\begin{equation}
X_\psi\in I.
\end{equation}
Now $I$ is $\theta$-complete and $|\pcf(C)|<\theta$, so
\begin{equation}
\bigcup\{X_\psi:\psi\in\pcf(C)\cap\lambda\}\in I
\end{equation}
and therefore
\begin{equation}
(\forall^I\zeta<\theta)(\forall \psi\in\pcf(C)\cap \lambda)\left[\{i<\kappa: \tau^\zeta_i\in B_\psi\}\in J\right].
\end{equation}

Thus, we can fix $\zeta^*<\theta$ with the above property, which says informally that  $\row(\zeta^*)$ ``runs away''  from $B_\psi$ modulo $J$ for all $\psi<\lambda$ in $\pcf(C)$.

We are now ready to define our sequence $\langle \psi_i:i<\kappa\rangle$. First, we dispose of a triviality: if $i<\kappa$ is not
one of the places where $\sigma$ works for $\lambda_i$, we define $\psi_i$ to be $\mu^+$. Such an $i$ will be called a {\em
trivial coordinate}, and the set of trivial coordinates is in $J$.  If $i<\kappa$ is a non-trivial coordinate, then our construction will provide us with a $\psi_i$ in $C_i$ whose generator covers $\tau^{\zeta^*}_i$, that is such that 
\begin{equation}
\tau^{\zeta^*}_i\in B_{\psi_i}.
\end{equation}
Since $\psi_i$ is in $\pcfsig(\col(i))\subseteq\lambda_i$, we also know
\begin{equation}
\mu<\psi_i<\lambda_i
\end{equation}
whenever $i$ is a non-trivial coordinate.  Thus, the following claim will finish the proof of our lemma.

\begin{claim}
The sequence $\langle \psi_i:i<\kappa\rangle$ has the required properties, that is:
\begin{enumerate}
\item $\langle \psi_i:i<\kappa\rangle$ is a sequence of regular cardinals with supremum~$\lambda^*$,

\sk

\item $(\forall\lambda'<\lambda^*)(\forall^Ji<\kappa)\left[\lambda'\leq\psi_i\right]$,

\sk

\item $(\forall^J i<\kappa)\left[\psi_i<\lambda_i\right]$, and

\sk

\item $\tcf\left(\prod_{i<\kappa}\psi_i/ J\right)=\lambda^+$.

\sk

\end{enumerate}
\end{claim}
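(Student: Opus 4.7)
Parts (1)--(3) come out of a single pointwise sandwich: on the $J$-large set of non-trivial coordinates $i$, we have $\tau^{\zeta^*}_i \leq \psi_i < \lambda_i$. The left inequality holds because $\tau^{\zeta^*}_i \in B_{\psi_i}$ and $\max\pcf(B_{\psi_i}) = \psi_i$ forces every element of $B_{\psi_i}$ to be at most $\psi_i$; the right inequality is built into the choice $\psi_i \in C_i \subseteq \pcfsig(\col(i)) \subseteq (\mu,\lambda_i)\cap\reg$. Part~(3) is then immediate since the trivial-coordinate set lies in $J$. For part~(2), admissibility of $\bar{\tau}^{\zeta^*}$ at $\lambda^*$ gives $\{i : \tau^{\zeta^*}_i \leq \lambda'\}\in J$ for each $\lambda' < \lambda^*$, and the sandwich transports this to the $\psi$-sequence. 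Part~(1) then follows: each $\psi_i$ is regular, and its supremum equals $\lambda^*$ by part~(2) together with the uniform bound $\psi_i < \lambda^*$ (either $\psi_i < \lambda_i < \lambda^*$ on non-trivial coordinates, or $\psi_i = \mu^+ < \lambda^*$ on trivial ones).

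For part~(4), the plan is to show $\tcf(\prod\psi_i/J) = \lambda^+$ by two-sided bounds. The upper bound is straightforward: since $\{\psi_i\} \subseteq \pcf(A)$ and $\max\pcf(A) = \lambda^+$, idempotency of $\pcf$ yields $\max\pcf(\{\psi_i\}) \leq \lambda^+$, which caps any true cofinality of the product. For the lower bound, suppose $\chi' := \tcf(\prod\psi_i/J) < \lambda^+$; regularity of $\chi'$ and singularity of $\lambda$ force $\chi' < \lambda$, and $\{\psi_i\} \subseteq C$ places $\chi' \in \pcf(\{\psi_i\}) \subseteq \pcf(C) \cap \lambda$. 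Transitivity of the fixed family of generators now delivers the inclusion $\{i : \psi_i \in B_{\chi'}\} \subseteq \{i : \tau^{\zeta^*}_i \in B_{\chi'}\}$, because $\psi_i \in B_{\chi'}$ forces $B_{\psi_i} \subseteq B_{\chi'}$ and hence $\tau^{\zeta^*}_i \in B_{\chi'}$. The defining property of $\zeta^*$ puts the right-hand set in $J$, while the pcf localization fact below forces the left-hand set to have $J$-small complement; the resulting $\kappa \in J$ contradicts properness.

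The main obstacle is the pcf localization fact just invoked: if $\tcf(\prod\psi_i/J) = \chi'$, then $\{i : \psi_i \notin B_{\chi'}\} \in J$. I expect this to fall out of a standard restriction argument: were $X := \{i : \psi_i \notin B_{\chi'}\}$ to be $J$-positive, the restricted cofinal chain would witness $\chi' \in \pcf(\{\psi_i : i \in X\})$; but $\{\psi_i : i \in X\} \subseteq \pcf(A) \setminus B_{\chi'}$, and transitivity of the generator family forces the $\chi'$-generator on this subset to be empty, pushing $\chi'$ out of its pcf for a contradiction. Existence of the true cofinality itself is a related but secondary point: the same characterization identifies $\lambda^+$ as the unique viable value of $\tcf(\prod\psi_i/J)$, and a $\lambda^+$-cofinal chain can be realized via standard pcf generator machinery applied to $\{\psi_i\}$ sitting inside $\pcf(A)$.
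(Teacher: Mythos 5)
Your treatment of (1)--(3) via the sandwich $\tau^{\zeta^*}_i\leq\psi_i<\lambda_i$ on the $J$-large set of non-trivial coordinates is exactly the paper's argument and is fine. For (4), you have identified the two essential ingredients --- the ``runaway'' property of $\zeta^*$ and transitivity of the generator family --- and these are precisely what the paper uses. The difference is in the packaging, and your packaging has a genuine logical gap.

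You frame (4) as a two-sided bound and argue by contradiction from ``suppose $\chi':=\tcf(\prod\psi_i/J)<\lambda^+$''. This presupposes that the true cofinality exists, which is part of what must be proved: nothing in the construction so far guarantees that $\prod_i\psi_i/J$ is linearly directed. You acknowledge the issue at the end but wave at it with ``the same characterization identifies $\lambda^+$ as the unique viable value'', which does not actually produce a cofinal linearly ordered sequence. The clean route (which is the paper's) is to drop the contradiction framing entirely: set $D=\{\psi_i:i<\kappa\}$, let $\mathcal{J}$ be the pushforward of $J$ to $D$, and prove directly that $J_{<\lambda^+}[D]\subseteq\mathcal{J}$ by showing, for \emph{every} $\psi\in\pcf(D)\cap\lambda^+$ (not merely a hypothetical tcf), that $\{i:\psi_i\in B_\psi\}\in J$. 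Your transitivity chain $\psi_i\in B_\psi\Rightarrow B_{\psi_i}\subseteq B_\psi\Rightarrow\tau^{\zeta^*}_i\in B_\psi$ followed by the runaway property of $\zeta^*$ gives exactly this, uniformly in $\psi$. Once $J_{<\lambda^+}[D]\subseteq\mathcal{J}$ is in hand, the properness of $\mathcal{J}$ together with $\max\pcf(D)\leq\max\pcf(A)=\lambda^+$ yields both the existence of the tcf and its value $\lambda^+$, by the standard characterization of true cofinality in terms of the pcf ideals. So the fix is a re-organization, not a new idea: run your lower-bound argument unconditionally for all $\psi<\lambda^+$ in $\pcf(D)$ rather than only for a putative tcf, and the existence problem evaporates. (One small bookkeeping point worth flagging: $D$ is contained in $C\cup\{\mu^+\}$ rather than in $C$ itself, because of the trivial coordinates; since those coordinates form a $J$-small set, this does not affect the argument, but you should say so.)
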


\begin{proof}[Proof of Claim]

First, note that if $i$ is a non-trivial coordinate, then
\begin{equation}
\tau^{\zeta^*}_i\leq\psi_i<\lambda_i,
\end{equation}
because $\tau^{\zeta^*}_i$ is in $B_{\psi_i}$.  Thus, we need only establish (4).

Let $D = \{\psi_i:i<\kappa\}$, and let $\mathcal{J}$ be the ideal on $D$ defined by
\begin{equation}
X\in\mathcal{J}\Longleftrightarrow \{i<\kappa:\psi_i\in X\}\in J.
\end{equation}
The statement (4) is equivalent to proving that the true cofinality of $\prod D/ \mathcal{J}$ is defined and equal to
$\lambda^+$, and this follows provided we can establish
\begin{equation}
J_{<{\lambda^+}}[D]\subseteq\mathcal{J}.
\end{equation}
Pcf theory tells us that this is equivalent to showing
\begin{equation}
\label{goal}
\{i<\kappa: \psi_i\in B_\psi\}\in J
\end{equation}
whenever $\psi\in\pcf(D)\cap\lambda$, as this tells us that $B_\psi\in\mathcal{J}$ for each such $\psi$.  Clearly we need only
worry about the non-trivial coordinates.

Given such a $\psi\in\pcf(D)\cap \lambda$, we note that $\psi\in\pcf(C)\cap\lambda$ and so $\row(\zeta^*)$ runs away from
$B_\psi$ by choice of $\zeta^*$, that is,
\begin{equation}
\label{part1}
\{i<\kappa: \tau^{\zeta^*}_i\in B_\psi\}\in J.
\end{equation}
Our generators were chosen to be transitive, so that
\begin{equation}
\psi_i\in B_\psi\Longrightarrow B_{\psi_i}\subseteq B_\psi,
\end{equation}
and since $\tau^{\zeta^*}_i\in B_{\psi_i}$, we conclude
\begin{equation}
\label{part2}
\psi_i\in B_{\psi}\Longrightarrow \tau^{\zeta^*}_i\in B_\psi.
\end{equation}
Putting (\ref{part1}) and (\ref{part2}) together establishes (\ref{goal}), and we're done establishing both the claim, and Lemma~\ref{mainlemma}.
\end{proof}

\subsection{Proof of the main theorem}

Recall that we are assuming that $\lambda$ is the least counterexample for $\mu$, and we know that $\lambda$ is singular with
$\cf(\lambda)=\cf(\mu)$.  We apply Lemma 3  to find a regular cardinal $\kappa$ such that $\cf(\mu)<\kappa<\mu$, a
$\kappa$-complete ideal $J$ on $\kappa$, and singular $\lambda^*$ with $\mu<\lambda^*<\lambda$ such that there is an admissible
sequence $\bar{\lambda}$ representing $\lambda^+$ at $\lambda^*$ modulo $J$. Since $\kappa$ is uncountable and $J$ is
$\kappa$-complete, we can find such a $\bar{\lambda}$ that is {\em minimal} modulo $J$, that is, we can choose
$\bar{\lambda}=\langle \lambda_i:i<\kappa\rangle$ such that

\begin{itemize}
\item each $\lambda_i$ is a regular cardinal between $\mu$ and $\lambda^*$

\sk

\item $\lambda^* = \sup\{\lambda_i:i<\kappa\}$

\sk

\item $(\forall\lambda'<\lambda^*)(\forall^Ji<\kappa)[\lambda'\leq\lambda_i]$

\sk

\item $\lambda^+=\tcf\left(\prod_{i<\kappa}\lambda_i/J\right)$, and

\sk

\item if $\bar{\psi}=\langle\psi_i:i<\kappa\rangle$ is an admissible sequence below $\bar{\lambda}$, then $\bar{\psi}$ cannot
    represent $\lambda^+$.
\end{itemize}

In light of Lemma~\ref{mainlemma}, the minimality of $\bar{\lambda}$ means that no single $\sigma$ can work for all (even almost all)
$\lambda_i$.  However, since $\kappa>\cf(\mu)$ and $J$ is $\kappa$-complete, we can find $\langle
X_\alpha:\alpha<\cf(\mu)\rangle$ and $\langle \sigma_\alpha:\alpha<\cf(\mu)\rangle$ such that
\begin{itemize}
\item $\langle X_\alpha:\alpha<\cf(\mu)\rangle$ is a partition of $\kappa$ into $J$-positive sets,

\sk

\item $\langle \sigma_\alpha:\alpha<\cf(\mu)\rangle$ is an increasing sequence of regular cardinals cofinal in $\mu$

\sk

\item $\sigma_\alpha$ works for $\lambda_i$ whenever $i\in X_\alpha$.

\sk

\end{itemize}

For $\alpha<\cf(\mu)$, we let $J_\alpha$ be the ideal $J+(\kappa\setminus X_\alpha)$, that is,
\begin{equation}
X\in J_\alpha\Longleftrightarrow X\cap X_\alpha\in J.
\end{equation}

Given $\alpha<\cf(\mu)$, we note that the hypotheses of Lemma 4 are satisfied with $J = J_\alpha$ and $\sigma = \sigma_\alpha$,
so we get a corresponding sequence of cardinals $\langle \psi_i^\alpha:\alpha<\kappa\rangle$ that represent $\lambda^+$ modulo
$J_\alpha$.  Let $\bar{f}^\alpha = \langle f_\epsilon^\alpha:\epsilon<\lambda^+\rangle$ witness
\begin{equation}
\tcf\left(\prod_{i<\kappa}\psi_i^\alpha/ J_\alpha \right)=\lambda^+.
\end{equation}

Now we try the obvious thing and things work because $\cf(\mu)<\kappa$ and $J$ is $\kappa$-complete.
Given $i<\kappa$, we let $\alpha(i)$ be the unique $\alpha<\cf(\mu)$ with $i\in X_\alpha$, and define for $i<\kappa$
\begin{equation}
\psi_i = \psi^{\alpha(i)}_i.
\end{equation}

\begin{claim} $\langle\psi_i:i<\kappa\rangle$ is an admissible sequence below $\bar{\lambda}$ that
represents~$\lambda^+$.
\end{claim}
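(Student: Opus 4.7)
The plan is to verify the three substantive properties listed in the claim---admissibility, dominance by $\bar{\lambda}$ modulo~$J$, and true cofinality $\lambda^+$ modulo~$J$---by lifting the corresponding properties of the $\bar{\psi}^\alpha$ modulo $J_\alpha$ up to properties modulo~$J$. The unifying tool is the $\kappa$-completeness of $J$ together with the fact that our partition has only $\cf(\mu)<\kappa$ pieces, so any $\cf(\mu)$-indexed union of $J$-null sets remains in $J$.

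For admissibility, each $\psi_i=\psi_i^{\alpha(i)}$ is a regular cardinal strictly between $\mu$ and $\lambda^*$ because each $\bar{\psi}^\alpha$ has these properties by Lemma~\ref{mainlemma}. For the bounded-below condition, I would fix $\lambda'<\lambda^*$ and observe that the $J_\alpha$-admissibility of $\bar{\psi}^\alpha$ gives $\{i\in X_\alpha:\psi_i^\alpha\leq\lambda'\}\in J$. Since $\psi_i$ agrees with $\psi_i^\alpha$ on $X_\alpha$, the set $\{i<\kappa:\psi_i\leq\lambda'\}$ is the union over $\alpha<\cf(\mu)$ of these $J$-null sets, hence in $J$ by $\kappa$-completeness. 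An identical book-keeping argument using $\psi_i^\alpha<\lambda_i$ on a $J_\alpha$-large set (the output of Lemma~\ref{mainlemma}) yields $\psi_i<\lambda_i$ on a $J$-large set.

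The heart of the claim is the true cofinality. I would diagonalize the $\cf(\mu)$ witness scales by setting
\[
f_\epsilon(i):=f_\epsilon^{\alpha(i)}(i)\qquad\text{for }\epsilon<\lambda^+\text{ and }i<\kappa,
\]
and then argue that $\langle f_\epsilon:\epsilon<\lambda^+\rangle$ is $<_J$-increasing and $<_J$-cofinal in $\prod_{i<\kappa}\psi_i$. For monotonicity, given $\epsilon<\epsilon'<\lambda^+$, each $\alpha$ contributes $\{i\in X_\alpha:f_\epsilon(i)\geq f_{\epsilon'}(i)\}\in J$, and $\kappa$-completeness closes the $\cf(\mu)$-fold union. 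For cofinality, given $g\in\prod_{i<\kappa}\psi_i$, I would extend $g\restr X_\alpha$ to some $g^\alpha\in\prod_{i<\kappa}\psi_i^\alpha$ (with arbitrary values off $X_\alpha$), use the scale $\bar{f}^\alpha$ to find $\epsilon_\alpha<\lambda^+$ with $g^\alpha<_{J_\alpha}f_{\epsilon_\alpha}^\alpha$, and set $\epsilon:=\sup_{\alpha<\cf(\mu)}\epsilon_\alpha$. Since $\cf(\mu)<\mu<\lambda^+$, we have $\epsilon<\lambda^+$, and monotonicity of each scale plus one more $\kappa$-completeness argument gives $g<_J f_\epsilon$.

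The main obstacle is mostly psychological: one must trust $\kappa$-completeness to do its job and commit to the diagonal definition $f_\epsilon(i)=f_\epsilon^{\alpha(i)}(i)$. Once these choices are made, monotonicity and cofinality unwind to routine book-keeping. The resulting $\bar{\psi}$ then contradicts the minimality of $\bar{\lambda}$, delivering the contradiction that completes the proof of the Main Theorem.
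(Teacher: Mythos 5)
Your proposal is correct and takes essentially the same approach as the paper: diagonalize the $\cf(\mu)$-many scales via $f_\epsilon(i)=f_\epsilon^{\alpha(i)}(i)$ and use $\kappa$-completeness of $J$ (with $\cf(\mu)<\kappa$) to close all the unions of $J$-null sets over the partition pieces $X_\alpha$. Your write-up actually supplies the monotonicity and cofinality verification for $\langle f_\epsilon:\epsilon<\lambda^+\rangle$ that the paper leaves implicit.
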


\begin{proof}
Certainly each $\psi_i$ is a regular cardinal between $\mu$ and $\lambda^*$.  Given $\lambda'<\lambda^*$ and $\alpha<\cf(\mu)$,
the definition of $J_\alpha$ tells us that
\begin{equation}
Y_\alpha = \{i\in X_\alpha: \lambda'\leq\psi^\alpha_i\}\in J
\end{equation}
and since $J$ is $\kappa$-complete,
\begin{equation}
\{i\in X: \lambda'\leq\psi_i\} = \bigcup_{\alpha<\cf(\mu)}\{i\in X_\alpha: \lambda'\leq\psi_i^\alpha\}\in J.
\end{equation}
Similarly, $\{i<\kappa: \psi_i<\lambda_i\}$ is in $J$, so our sequence is below $\bar{\lambda}$.

Now given $\epsilon<\lambda^+$, define a function $f_\epsilon$ with domain $\kappa$ by
\begin{equation}
f_\epsilon(i) = f^{\alpha(i)}(i),
\end{equation}
and the sequence $\langle f_\epsilon:\epsilon<\lambda^+\rangle$ shows us that $\bar{\psi}$ represents $\lambda^+$ at $\lambda^*$
modulo $J$.
\end{proof}

The preceding claim now yields our contradiction, as $\bar{\lambda}$ was allegedly a $J$-minimal $J$-admissible sequence representing $\lambda^+$ at $\lambda^*$.

\end{proof}

\end{document}